\documentclass{article}
\usepackage{amsthm,amsmath,amsfonts,vmargin}
\usepackage{hyperref}

\setpapersize{A4}
\setmargrb{30mm}{20mm}{30mm}{20mm}

\def\N{\mathbb{N}}
\def\Z{\mathbb{Z}}
\def\F{\mathbb{F}}

\def\EEE{\left\langle\left\langle E \right\rangle\right\rangle}
\def\spA{\left\langle A \right\rangle}

\def\IS{\mathrm{IS}}
\def\height{\mathrm{height}}
\def\span{\mathrm{span}}
\def\codim{\mathrm{codim}}

\newtheorem{thm}{Theorem}
\newtheorem*{thm*}{Theorem}
\newtheorem{claim}{Claim}
\newtheorem*{claim*}{Claim}
\newtheorem{lemma}[thm]{Lemma}
\newtheorem*{lemma*}{Lemma}

\newtheorem*{prop*}{Proposition}
\newtheorem{cor}[thm]{Corollary}
\newtheorem*{cor*}{Corollary}
\newtheorem{conj}[thm]{Conjecture}
\newtheorem*{conj*}{Conjecture}
\theoremstyle{remark}

\newtheorem*{rmk*}{Remark}

\newtheorem*{exm*}{Example}

\newtheorem*{rmks*}{Remarks}

\title{The Freiman--Ruzsa Theorem over Finite Fields}

\author{
Chaim Even-Zohar
\thanks{
Einstein Institute of Mathematics, HUJI, Israel. e-mail: \texttt{chaim.evenzohar@mail.huji.ac.il}.}
\and
Shachar Lovett
\thanks{CSE department, UC San-Diego. e-mail: \texttt{slovett@cse.ucsd.edu}. Supported by NSF CAREER award 1350481.}
}

\begin{document}

\maketitle

\begin{abstract}
Let $G$ be a finite abelian group of torsion $r$ and let $A$ be a subset of $G$.
The Freiman--Ruzsa theorem asserts that if $|A+A| \le K|A|$
then $A$ is contained in a coset of a subgroup of $G$ of size at most $K^2 r^{K^4} |A|$.
It was conjectured by Ruzsa that the subgroup size can be reduced to $r^{CK}|A|$
for some absolute constant $C \geq 2$.
This conjecture was verified for $r=2$ in a sequence of recent works,
which have, in fact, yielded a tight bound.
In this work, we establish the same conjecture for any prime torsion.

\begin{flushleft}
\emph{Keywords:} sumset; abelian; doubling; compression

\emph{MSC:} 11P70 (05D99)
\end{flushleft}

\end{abstract}

\section{Introduction}

Let $A$ be a subset of a finite abelian group.
The \emph{doubling constant} of $A$ is defined by $|A+A|/|A|$,
where as usual $A+B = \{a+b \;|\; a \in A, b \in B \}$.
The \emph{spanning constant} of $A$ is defined by $|\left\langle A \right\rangle|/|A|$,
where $\left\langle A \right\rangle$ is the \emph{affine span} of $A$,
i.e., the smallest subgroup or coset of a subgroup containing $A$.

The Freiman--Ruzsa theorem in Finite Torsion Groups~\cite{ruzsa_thm}
explores the relation between these two parameters, in groups of a fixed torsion $r$.
Namely, we are assuming that $r$ is the largest order of an element in the underlying group.

\begin{thm}[Ruzsa~\cite{ruzsa_thm}]\label{freiman-ruzsa}
Let $A$ be a finite subset of an abelian group of torsion $r$.
Then
$$ \frac{|A+A|}{|A|} \leq K \;\;\;\;\; \Rightarrow \;\;\;\;\;
   \frac{|\left\langle A \right\rangle|}{|A|} \leq K^2r^{K^4} .$$
\end{thm}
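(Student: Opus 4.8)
The plan is to combine the Plünnecke--Ruzsa sumset inequalities with Ruzsa's covering lemma, exploiting the elementary fact that in an abelian group of torsion $r$ the subgroup generated by any $t$ elements has size at most $r^t$, being a homomorphic image of $(\Z/r)^t$. First I would normalize: replacing $A$ by a translate changes neither $|A+A|/|A|$ nor $|\spA|/|A|$, so I may assume $0 \in A$. Then $\spA$ is a genuine subgroup of $G$, and in fact $\spA = \bigcup_{N \geq 0} NA$, since $0 \in A$ makes the sets $NA$ increasing and closed under negation (as $-a = (\mathrm{ord}(a)-1)a$).

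From the hypothesis $|A+A| \le K|A|$, the Plünnecke--Ruzsa inequality yields $|mA - nA| \le K^{m+n}|A|$ for all $m,n \ge 0$; I will need only the instances $|A-A| \le K^2|A|$ and $|3A - A| \le K^4|A|$. The crux is to apply Ruzsa's covering lemma to the set $2A - A$ relative to $A$: since $\left|(2A - A) + A\right| = |3A - A| \le K^4|A|$, there exists a set $X$ with $|X| \le K^4$ such that
$$ 2A - A \ \subseteq\ X + (A - A) . $$
The reason $2A - A$ is the right set to cover is the identity $A + (A - A) = 2A - A$: adding $A$ to $A - A$ returns to $2A - A$, which the displayed inclusion then folds back into $X + (A-A)$. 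Concretely I would prove by induction on $n \ge 1$ that $nA - A \subseteq \left\langle X\right\rangle + (A - A)$, where $\left\langle X\right\rangle$ denotes the subgroup generated by $X$; the base case $n=1$ is trivial and the inductive step is
$$ (n+1)A - A \ =\ A + (nA - A)\ \subseteq\ A + \left\langle X\right\rangle + (A - A)\ =\ \left\langle X\right\rangle + (2A - A)\ \subseteq\ \left\langle X\right\rangle + (A - A), $$
the last inclusion using $2A - A \subseteq X + (A-A)$ together with $\left\langle X\right\rangle + X = \left\langle X\right\rangle$. Since any $g \in \spA = \bigcup_N NA$ lies in $NA \subseteq (N+1)A - A$ for some $N$, this gives $\spA \subseteq \left\langle X\right\rangle + (A - A)$, and therefore
$$ |\spA| \ \le\ |\left\langle X\right\rangle|\cdot|A - A|\ \le\ r^{|X|}\cdot K^2|A|\ \le\ r^{K^4}\,K^2\,|A|, $$
which is exactly the asserted bound $|\spA|/|A| \le K^2 r^{K^4}$.

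I expect the only genuinely non-formal step to be the choice of which sumset to cover. Covering $2A$ instead leaves the iterated sumsets $2A - 2A,\ 3A - 2A,\dots$ reappearing, so the induction never closes; covering the symmetric set $2A - 2A$ does close but, since $\left|(2A - 2A)+A\right| = |3A - 2A|$ can only be bounded by $K^5|A|$, it yields the weaker exponent $K^5$. The set $2A - A$ is the sweet spot that both closes and costs only $K^4$. I would cite the Plünnecke--Ruzsa inequality rather than reprove it, but I would include the short proof of the covering step: take $X \subseteq 2A - A$ maximal subject to the translates $x + A$ ($x \in X$) being pairwise disjoint; maximality forces $2A - A \subseteq X + (A-A)$, while disjointness together with $\bigcup_{x \in X}(x+A) \subseteq 3A - A$ forces $|X|\,|A| \le |3A - A| \le K^4|A|$.
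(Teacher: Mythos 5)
The paper does not actually prove Theorem~\ref{freiman-ruzsa}: it is quoted as a known result of Ruzsa, and the paper's own contribution is Theorem~\ref{main}. That said, your proof is correct, and it is essentially Ruzsa's original argument. The chain of ideas is the standard one: normalize so $0\in A$, whence $\spA=\bigcup_N NA$ is a subgroup; use Pl\"unnecke--Ruzsa for $|A-A|\le K^2|A|$ and $|3A-A|\le K^4|A|$; apply Ruzsa's covering lemma to $2A-A$ to get $X$ with $|X|\le K^4$ and $2A-A\subseteq X+(A-A)$; iterate via the identity $A+(A-A)=2A-A$ to conclude $\spA\subseteq\langle X\rangle+(A-A)$; and finish with the elementary bound $|\langle X\rangle|\le r^{|X|}$, valid because $\langle X\rangle$ is a quotient of $(\Z/r\Z)^{|X|}$. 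Each of these steps checks out, including the covering-lemma proof you sketch at the end (maximal $X\subseteq 2A-A$ with disjoint translates $x+A$, all lying in $3A-A$). Your remark on why $2A-A$ is the ``sweet spot'' --- that $2A$ does not let the induction close, while $2A-2A$ closes but costs $K^5$ --- is exactly the right observation for recovering the exponent $K^4$ rather than something worse.
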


It is natural to ask how tight this bound is.
To this end, the following function is defined for $r \in \N$ and $K \geq 1$.
$$ F(r,K) = \sup\left\{ \frac{\left|\left\langle A\right\rangle\right|}{|A|} \;\Bigg{|}\;
              A \subseteq \Z_r^n, \; n \in \N, \; \frac{|A+A|}{|A|} \leq  K \right\} .$$
Here and throughout, $\Z_r = \Z/r\Z$.
Note that there is no loss of generality in assuming $A \subseteq \Z_r^n$,
rather than considering a general abelian $r$-torsion group.
Otherwise, $A \subseteq G = \Z_r^n / H$ for some $n$ and $H$,
and the same doubling and spanning constants can be achieved
by taking the preimage of $A$ under the quotient map.

A lower bound on $F(r,K)$ is obtained by taking a set of affinely independent elements.
Specifically, if we choose $A = \{0,e_1,e_2,...,e_{2K-2}\}$,
the canonical basis of $\Z_r^{2K-2}$, for $K \in \frac12 \N$ and $r \geq 3$,
then the doubling constant of $A$ equals $K$, and we have
\begin{equation}\label{lower}
F(r,K) \;\geq\; \frac{r^{2K-2}}{2K-1} .
\end{equation}
This leads to the following conjecture.

\begin{conj}[Ruzsa~\cite{ruzsa_thm}]\label{ruzsa_conj}
There exists some $C \geq 2$ for which $F(r,K) \leq r^{CK}$.
\end{conj}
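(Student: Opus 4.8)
\medskip
\noindent\emph{Proof strategy.}
The plan is to recast the conjecture as a bound $\codim(A)\le CK$ on a codimension parameter and to prove it by compression. First reduce: translating so that $0\in A$ makes $\spA$ a subgroup, and after replacing the ambient group by $\spA$ we may assume $A$ generates $\Z_r^n$ with $0\in A$; writing $\codim(A):=n-\log_r|A|$, the conjecture becomes exactly $\codim(A)\le CK$. Since $r$ is prime, $\Z_r^n$ is a vector space, so every coordinate projection $\pi_i\colon\Z_r^n\to\Z_r^{n-1}$ (killing the $i$th coordinate) is surjective and $\pi_i(A)$ again generates $\Z_r^{n-1}$.

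The engine is coordinate compression: partition $\Z_r^n$ into lines parallel to $e_i$, and let $C_i(A)$ push, on each line, the elements of $A$ down to the lowest positions $0,1,2,\dots$. One checks $|C_i(A)|=|A|$ and $|C_i(A)+C_i(A)|\le|A+A|$, so the doubling constant does not increase. The delicate point, absent for $\F_2$, is the span: a compression can collapse $\spA$ (it may carry a plane-spanning subset of $\Z_r^2$ into a line). The saving observation is that if $A$ generates $\Z_r^n$ and is \emph{not injective} in direction $i$ — some $e_i$-line meets $A$ twice — then $C_i(A)$ still generates $\Z_r^n$: otherwise $C_i(A)$ lies in a hyperplane $\ker\psi$ through $0$, a doubly-occupied line of the down-compressed $C_i(A)$ forces $\psi(e_i)=0$, and then $\pi_i(A)$ lies in a hyperplane of $\Z_r^{n-1}$, contradicting that $\pi_i(A)$ generates. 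So one compresses only in non-injective directions; this terminates, preserving doubling, span and size, and yields a set monotone along every non-injective direction. Two further cost-free reductions interleave: a \emph{free} direction $i$ (where $A+e_i=A$) factors out, $A=A'\times\Z_r$, leaving $\codim$, doubling and monotonicity unchanged; and in an \emph{injective} direction $i$, $\pi_i$ is injective on $A$, so $\pi_i(A)$ has the same size, generates $\Z_r^{n-1}$, has doubling at most that of $A$, and has codimension exactly one smaller. Iterating these moves terminates at a monotone generating set with no free direction — and hence no injective direction, since monotone-plus-injective in direction $i$ would force $A\subseteq\{x_i=0\}$.

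It remains to (a) bound $\codim$ for a monotone generating set, and (b) bound the number of injective projections used. For (a) I would induct on the number of coordinates through the fibration by one coordinate, $A=\bigsqcup_j(A_j\times\{j\})$ with $A_0\supseteq A_1\supseteq\cdots$ a chain in $\Z_r^{n-1}$ and $0\in A_0$: then $A_0$ generates $\Z_r^{n-1}$ (else all $A_j\subseteq\langle A_0\rangle$ and $A$ lies in a proper subgroup), and $|A+A|=\sum_j|(A+A)_j|$ with $(A+A)_j\supseteq A_0+A_j$. Feeding the inductive bound on $|A_0+A_0|$ into this, together with $|A_0+A_j|\ge|A_j+A_j|$ and a second use of the inductive hypothesis when $A_j$ is large, one aims to close the induction with $C$ just above the value $2$ dictated by \eqref{lower}, the extremal sets being the simplices $\{0,e_1,\dots,e_{2K-2}\}$. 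For (b), one shows that after the compressions each surviving injective direction already carries a definite amount of doubling, so at most $O(K)$ of them can occur.

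I expect (b) and the span control it relies on to be the main obstacle: preserving $\spA$ through the compressions is exactly what makes the general prime case harder than $\F_2$, and the observation above covers only non-injective directions, so the real work is in organising the compressions and the injective projections together and in bounding the number of the latter \emph{linearly} in $K$, rather than merely by the exponent $K^4$ of Theorem~\ref{freiman-ruzsa}. The monotone estimate (a) is a computation, but not a routine one: a naive fibre induction does not close, and one needs the right inductive quantity — morally, codimension carried together with a ``height'' along the chain $A_0\supseteq A_1\supseteq\cdots$ — to extract the linear-in-$K$ constant.
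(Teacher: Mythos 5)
Your approach departs from the paper's in a fundamental way, and the departure is exactly where the gaps open up. The paper does not restrict itself to coordinate compressions $C_{e_i}$. It defines $v$-compressions along \emph{arbitrary} directions $v \in \F_p^n\setminus\{0\}$, and the criterion for applying one is not your non-injectivity test but the condition $E \subseteq C_v(A)$ where $E = \{0,e_1,\dots,e_n\}$. Compressions along directions such as $e_i - e_j$ or $e_i - v$ (for $v \in A \cap \span\{e_1,\dots,e_{i-1}\}$) are what make Lemma~\ref{structure} work: the resulting $\EEE$-compressed set is forced into the very rigid form $H \cup (a_1+H) \cup \cdots \cup ((q-1)a_1+H) \cup A_1 \cup \cdots \cup A_m$, and the remainder of the proof is a direct two-parameter optimization (Claims~\ref{claim1} and~\ref{claim2}) over $(q,m)$. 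There is no projection step, no fibre induction, and no counting of discarded directions.

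Your non-injectivity observation is correct as far as it goes: if some $e_i$-line meets $A$ twice and $C_{e_i}(A)$ were trapped in a hyperplane $\ker\psi$, then $\psi(e_i)=0$, so $\pi_i(A)=\pi_i(C_{e_i}(A))$ would lie in a proper subspace, contradicting surjectivity of $\pi_i$ on a generating set. But because you only use coordinate compressions, the terminal object is merely a coordinatewise down-set, a far weaker structure than the paper's. Over $\F_2$ that distinction is small (down-sets are simplicial complexes), which is why Green--Tao could proceed with coordinate compressions alone; over $\F_p$ for $p>2$ a coordinatewise down-set in $\{0,\dots,p-1\}^n$ can be wildly more varied than the chains of $H$-cosets in Lemma~\ref{structure}. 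Your part (a) would have to prove the linear-in-$K$ codimension bound for all such down-sets, and you acknowledge yourself that ``a naive fibre induction does not close.'' This is not a routine computation to fill in; it is the hard part, and the paper avoids it by strengthening the compression regime.

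Part (b) is the more serious gap, and it is not present in the paper at all. If $A$ is injective in direction $i$ and we write $A = \{(\pi_i(a), \alpha(\pi_i(a)))\}$, projecting gives $\pi_i(A+A) = \pi_i(A)+\pi_i(A)$; the only thing that generating $\Z_r^n$ guarantees is that $\alpha$ is not the restriction of a linear map, which forces at least one fibre of $A+A$ over $\pi_i(A)+\pi_i(A)$ to contain two elements, i.e.\ $|A+A| \ge |\pi_i(A)+\pi_i(A)|+1$. That is one extra element, not a constant fraction of $|A|$, so it does not bound the number of projections by $O(K)$; it bounds it by $O(K|A|)$, which is useless. Turning ``each surviving injective direction carries a definite amount of doubling'' into something that compounds correctly as projections and further compressions interleave would require a new idea, and nothing in your sketch supplies it. In short, your scheme has the right moral shape (compress, preserve span, reduce to a structured set, count), but the two ingredients you flag as ``remains to'' are precisely the things the paper circumvents by enlarging the class of allowed compressions, and neither is filled in.
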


Green and Ruzsa~\cite{green_ruzsa}
lowered the bound in Theorem~\ref{freiman-ruzsa} to $F(r,K) \leq K^2 r^{2K^2-2}$.
In the special case $r=2$, further progress has been made~\cite
{deshouillers_hennecart_plagne,sanders,green_tao,konyagin,evenzohar}.
In particular, Green and Tao~\cite{green_tao} showed that
$F(2,K) \leq 2^{2K+O(\sqrt{K} \log K)}$, thus settling Conjecture~\ref{ruzsa_conj} for $r=2$.
A refinement of their argument enabled the first author~\cite{evenzohar}
to find the exact value of $F(2,K)$, which turned out to be $\Theta(2^{2K}/K)$.
In this note we extend these techniques to the case of general prime torsion.

\begin{thm}\label{main}
For $p > 2$ prime and $K \geq K_0$,
$$F(p,K) \;\leq\; \frac{p^{2K-2}}{2K-1}.$$
Here $K_0 = 8$ is an absolute constant.
\end{thm}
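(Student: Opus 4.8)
The plan is to adapt the compression method used by Green--Tao and by the first author in the case $p=2$, substituting the additive structure of $\Z_p$ for that of $\Z_2$. First I would translate $A$ so that $0\in A$, so that $\langle A\rangle=\span(A)$, and discard the coordinates not spanned by $A$, so that $\span(A)=\Z_p^{\,n}$ and the claim becomes $p^{\,n}(2K-1)\le p^{\,2K-2}|A|$. Then, using $\span(A)=\Z_p^{\,n}$ and $0\in A$, I would pick linearly independent $v_1,\dots,v_n\in A$ and change basis so that $v_i=e_i$; thus $\{0,e_1,\dots,e_n\}\subseteq A$.

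\textbf{Compression to a down-set.}
For each coordinate $i$ let $C_i$ be the down-compression which, on every line parallel to $e_i$, replaces the set of attained values by the initial segment of $\Z_p$ of the same size. Writing $(x',t)$ with $x'\in\Z_p^{\,n-1}$, $t\in\Z_p$ for the coordinate split and $A_{x'}=\{t:(x',t)\in A\}$, one checks $|C_i(A)|=|A|$ and $|C_i(A)+C_i(A)|\le|A+A|$: the line of $A+A$ through $z'$ contains at least $\max_{x'+y'=z'}|A_{x'}+A_{y'}|$ points, which by the Cauchy--Davenport inequality in $\Z_p$ is at least $\max_{x'+y'=z'}\min(p,\,|A_{x'}|+|A_{y'}|-1)$ --- exactly the size of the corresponding line of $C_i(A)+C_i(A)$. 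This is the one place the primality of $p$ enters, and it is essential. Each $C_i$ fixes $0$ and keeps each $e_j$ in the set, so it preserves $\{0,e_1,\dots,e_n\}\subseteq A$ and hence $\span(A)=\Z_p^{\,n}$; and the potential $\sum_{a\in A}\sum_i a_i^2$ (each $a_i$ read in $\{0,\dots,p-1\}$) strictly decreases at every nontrivial compression. So iterating the $C_i$ will terminate at a down-set $D\subseteq\Z_p^{\,n}$ with $|D|=|A|$, $|D+D|\le K|D|$ and $\span(D)=\Z_p^{\,n}$, and it suffices to prove the inequality for such $D$.

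\textbf{The down-set estimate.}
Here the plan is a careful induction on $n$ in which one coordinate is peeled off. Each line $\{x'\}\times\Z_p$ meets the down-set $D$ in an initial segment $\{0,\dots,c_{x'}-1\}$; the indices with $c_{x'}\ge1$ form a down-set $S\subseteq\Z_p^{\,n-1}$ on which $c$ is coordinatewise non-increasing, and since $\span(D)=\Z_p^{\,n}$ we get $e_n\in D$, hence $c_0\ge2$ and $\span(S)=\Z_p^{\,n-1}$. Then $|D|=\sum_{x'\in S}c_{x'}$ and $|D+D|=\sum_{z'\in S+S}\min(p,\,C(z')-1)$ where $C(z')=\max_{x'+y'=z'}(c_{x'}+c_{y'})$. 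I would feed the inductive bound for $S$ into an accounting of how the weights $c_{x'}$ enlarge $D$ relative to $D+D$, and compare with the extremal set $\{0,e_1,\dots,e_{2K-2}\}$ (itself a down-set of this shape), to conclude $p^{\,n}(2K-1)\le p^{\,2K-2}|D|$, with equality only there. Monotonicity of $p^{\,m}/(m+1)$ in $m$ then also covers the small-dimensional cases $n\le2K-2$.

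\textbf{Main obstacle.}
The hard part will be exactly this last comparison. Going from dimension $n-1$ to $n$ multiplies $|\langle D\rangle|$ by $p$, and one has to show this gain is always paid for by a matching growth of $D+D$ relative to $D$, with the inequality tight only at the extremal configuration. Making the two sides balance \emph{exactly} --- not merely up to lower-order corrections --- will require controlling several error terms polynomial in $K$, which are dominated by the main term only once $K$ is at least an absolute constant; that is where the threshold $K\ge K_0=8$ comes from.
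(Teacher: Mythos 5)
Your reduction to down-sets via coordinate compressions $C_i$ is correct (these are the special cases $v=e_i$ of the paper's $v$-compressions, and your Cauchy--Davenport argument for $|C_i(A)+C_i(A)|\le|A+A|$ matches Lemma~\ref{sumset}), but it is too weak for $p>2$, and this is where the proposal has a genuine gap. For $p=2$ a coordinate down-set containing $\{0,e_1,\dots,e_n\}$ already has enough rigidity for the Green--Tao/Even-Zohar analysis. For $p>2$ it does not: down-sets in $\F_p^n$ can be arbitrary "staircases" (e.g.\ $\{x:\sum x_i<p\}$ in $\F_p^2$), and there is no clean relation between $|D|$, $|D+D|$, and $p^n$ that makes your proposed induction on $n$ go through. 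Your "down-set estimate" section is a promissory note; you acknowledge in "Main obstacle" that making the two sides balance exactly is the hard part, but you offer no concrete mechanism, and I do not see how the induction could close without more structure than the down-set property gives.

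The paper resolves this with a different idea, which it explicitly flags as the key novelty over $p=2$: it compresses along \emph{arbitrary} directions $v\in\F_p^n\setminus\{0\}$, not only coordinate directions, subject to the constraint $E\subseteq C_v(A)$ (so that the affine span is preserved). A set that is $\EEE$-compressed in this stronger sense has a very rigid shape (Lemma~\ref{structure}): it is a union of whole cosets $0,a_1,\dots,(q-1)a_1$ of a maximal subgroup $H\subseteq A$, plus one partial $H$-coset in each of the $m$ directions $a_1,\dots,a_m$. For example $\{0,e_1,2e_1,3e_1,e_2,2e_2\}\subseteq\F_5^2$ is a coordinate down-set but is \emph{not} $\EEE$-compressed --- compressing along $v=e_2-e_1$ changes it --- so your reduction would stall on such a set while the paper's does not. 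With the structure lemma in hand, the final step in the paper is a direct two-parameter optimization in $q$ and $m$ (Claims~\ref{claim1} and~\ref{claim2}), not an induction on dimension. To repair your outline you would need to replace the "compress to a down-set, then induct" plan with "compress along all admissible directions $v$, prove the analogue of Lemma~\ref{structure}, then count"; as written, the inductive step you would need is precisely what the stronger compressions are there to avoid.
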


This verifies Ruzsa's conjecture for prime torsion.
Moreover, the prescribed upper bound is best possible for half-integer $K$,
as was demonstrated in~\eqref{lower}.
Although no attempt was made to optimize $K_0$,
we note that our method is essentially applicable to lower values of $K$.
For more details, see our comments at the end of Section~\ref{proofsection}.

Theorem~\ref{main} is proven in Section~\ref{proofsection}.
The proof elaborates on methods of subset compressions in~$\F_2^n$,
which were first employed in the present context by Green and Tao~\cite{green_tao}.
Although the general scheme of the proof is similar to~\cite{evenzohar},
the transition from $2$ to general $p$ requires a new type of compressions.
The definition and properties of these compressions appear in Section~\ref{compsection}.
We also make use of the following classical result~\cite{cauchy,davenport}.

\begin{thm}[Cauchy--Davenport]\label{c-d}
For non-empty $C,D \subseteq \F_p$, $|C+D| \geq \min(|C|+|D|-1,p)$.
\end{thm}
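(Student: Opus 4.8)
The plan is to prove Theorem~\ref{c-d} by induction on $|D|$ using the Dyson $e$-transform, a compression-type operation on pairs of sets that preserves $|C|+|D|$ while not enlarging $C+D$. Two preliminary reductions come first. Since translating $C$ and $D$ separately only translates $C+D$, I may assume $0\in C$ and $0\in D$. And if $|C|+|D|>p$, then for every $g\in\F_p$ the sets $C$ and $g-D$ have sizes summing to more than $p$, so they meet; this gives $g\in C+D$, hence $C+D=\F_p$ and the claimed bound holds with equality to $p$. So from now on $|C|+|D|\le p$, and it suffices to show $|C+D|\ge|C|+|D|-1$.

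For $e\in\F_p$ put $C_e=C\cup(D+e)$ and $D_e=(C-e)\cap D$. The argument rests on two facts. First, $|C_e|+|D_e|=|C|+|D|$: inclusion--exclusion gives $|C_e|=|C|+|D|-|C\cap(D+e)|$, and translating the intersection by $-e$ shows $|C\cap(D+e)|=|(C-e)\cap D|=|D_e|$. Second, $C_e+D_e\subseteq C+D$: if $u\in C_e$ and $v\in D_e$, then $v\in D$, so when $u\in C$ we get $u+v\in C+D$ directly, while when $u=d+e$ with $d\in D$ we use $v+e\in C$ to write $u+v=d+(v+e)\in D+C$. Hence $|C+D|\ge|C_e+D_e|$.

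Now the induction. The base case $|D|=1$ is immediate since $|C+D|=|C|$. For $|D|\ge 2$: if there is an $e$ with $\emptyset\subsetneq D_e\subsetneq D$, then $(C_e,D_e)$ satisfies the hypotheses with $|D_e|<|D|$, so by induction $|C+D|\ge|C_e+D_e|\ge|C_e|+|D_e|-1=|C|+|D|-1$. Otherwise every transform is trivial: for each $e$, either $D_e=\emptyset$, i.e.\ $(D+e)\cap C=\emptyset$, or $D_e=D$, i.e.\ $D+e\subseteq C$. For $e\in C$ the point $e=e+0$ lies in $(D+e)\cap C$ because $0\in D$, so necessarily $D+e\subseteq C$; as this holds for all $e\in C$ we get $C+D\subseteq C$, and with $0\in D$ we conclude $C+D=C$. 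Then $C+d\subseteq C$ for every $d\in D$, so iterating from $0\in C$ the cyclic subgroup $\langle d\rangle$ is contained in $C$; since $p$ is prime this forces either $d=0$ for all $d\in D$, contradicting $|D|\ge 2$, or $C=\F_p$, in which case $C+D=\F_p$ and the bound again holds.

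The subtle step is the dichotomy in the last branch: the deduction that $C$ must be a union of $\langle d\rangle$-cosets, hence all of $\F_p$ unless $d=0$, is precisely where primality of $p$ enters, and it is why the statement fails verbatim for composite moduli. A shorter alternative, which I would mention but not adopt as the main proof, bypasses the induction via the Combinatorial Nullstellensatz: assuming $|C+D|\le|C|+|D|-2$ (so $\le p-2$), pick $E\supseteq C+D$ with $|E|=|C|+|D|-2$ and apply the Nullstellensatz to $f(x,y)=\prod_{e\in E}(x+y-e)$, whose coefficient of $x^{|C|-1}y^{|D|-1}$ is $\binom{|C|+|D|-2}{|C|-1}\not\equiv 0\pmod p$; this yields $(c,d)\in C\times D$ with $f(c,d)\ne 0$, contradicting $c+d\in E$. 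I favour the $e$-transform proof because it is self-contained and its compression flavour matches the rest of the paper.
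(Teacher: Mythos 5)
Your proof is correct. One point of context first: the paper does not prove Theorem~\ref{c-d} at all --- it quotes the Cauchy--Davenport theorem as a classical result with citations to Cauchy and Davenport --- so there is no in-paper argument to compare against. What you have written is the standard Dyson $e$-transform proof, and every step checks out: the identity $|C_e|+|D_e|=|C|+|D|$, the containment $C_e+D_e\subseteq C+D$, the reduction to $|C|+|D|\le p$, and the terminal case where all transforms are trivial, forcing $C+D=C$ and hence $\langle d\rangle\subseteq C$ for each $d\in D$, which is where primality is used. A minor bookkeeping remark: when you invoke the induction hypothesis on $(C_e,D_e)$ you write $|C_e+D_e|\ge|C_e|+|D_e|-1$, which strictly speaking requires $|C_e|+|D_e|\le p$; this holds because $|C_e|+|D_e|=|C|+|D|\le p$ by your earlier reduction, and it is worth saying explicitly. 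Your choice of the $e$-transform over the Combinatorial Nullstellensatz is also apt here, since the transform $(C,D)\mapsto(C_e,D_e)$ is exactly in the spirit of the compressions $C_v$ used in Lemma~\ref{sumset}, where the paper applies Cauchy--Davenport line by line.
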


\section{Compressions in \texorpdfstring{$\F_p^n$}{}}\label{compsection}

Let $e_1,...,e_n$ be the standard basis of $\F_p^n$.
For $u \in \F_p^n$ we denote $u = \sum_{i=1}^{n} u_i e_i$ where $u_i \in \{0,...,p-1\}$.
For $u,v \in \F_p^n$ we say that $u \prec v$ in the \emph{lexicographic order},
if $u_i < v_i$ for the largest coordinate $i$ for which $u_i \neq v_i$.
For $L \subseteq \F_p^n$ and $k \leq |L|$,
we denote by $\IS(k,L)$ the \emph{initial segment} of size $k$ of $L$,
which is the set of the $k$ smallest elements of $L$ in the lexicographic order.

The line passing through $u \in \F_p^n$
in the direction of a nonzero $v \in \F_p^n$ is denoted
$L_v^u = u + \F_p v = \{u+kv \;|\; k \in \F_p\}$,
and the partition of $\F_p^n$ to \emph{$v$-lines} is denoted
$\mathcal{L}_v = \{L_v^u | u \in \F_p^n\}$.
Let $A \subseteq \F_p^n$. The \emph{$v$-compression} of $A$ is defined by
replacing $L \cap A$ with a same-cardinality initial segment of $L$
for every $v$-line $L$:
$$ C_v(A) = \bigcup\limits_{L \in \mathcal{L}_v} \IS(|A \cap L|,L) .$$
Note that $|C_v(A)|=|A|$.
If $C_v(A)=A$, we say that $A$ is \emph{$v$-compressed}.
One can check easily that $C_v(A)$ is $v$-compressed.
For $p = 2$ and $v = e_i$,
the compression operator $C_v$ coincides with $C_{\{i\}}$
as in~\cite{green_tao,evenzohar}.
Compressions along general multidimensional subspaces can be defined analogously,
but are not necessary for this work.

\begin{lemma}\label{sumset}
Let $A,B \subseteq \F_p^n$ and $v \in \F_p^n \setminus \{0\}$.
Then $C_v(A) + C_v(B) \subseteq C_v(A+B)$.
\end{lemma}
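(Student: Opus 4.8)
The plan is to decompose the problem line by line. Since $C_v$ acts independently on each $v$-line by replacing $A \cap L$ with an initial segment of the same size, and since the $v$-lines partition $\F_p^n$, it suffices to understand how sums interact with this partition. The key structural fact is that $v$-lines are cosets of the one-dimensional subspace $\F_p v$, so if $L = L_v^u$ and $L' = L_v^{u'}$ then $L + L' = L_v^{u+u'}$ is again a single $v$-line. Thus the ambient partition $\mathcal{L}_v$ behaves like a quotient: summing two lines lands in a uniquely determined third line.

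First I would fix a point $w \in C_v(A) + C_v(B)$ and write $w = a + b$ with $a \in C_v(A) \cap L$, $b \in C_v(B) \cap L'$ for some lines $L, L'$; then $w$ lies on the line $M := L + L' = L_v^{u+u'}$. I need to show $w \in \IS(|(A+B) \cap M|, M)$. Identify each of $L, L', M$ with $\F_p$ via the parametrization $k \mapsto u + kv$ (resp. with base points $u', u+u'$); under this identification the lexicographic order on a single $v$-line corresponds exactly to the order $0 < 1 < \dots < p-1$ on $\F_p$, because two points on the same $v$-line differ only in the coordinate directions dictated by $v$, and comparing $u+kv$ with $u+k'v$ reduces to comparing $k$ and $k'$ at the top nonzero coordinate of $v$ — scaled by the leading entry of $v$, which is a fixed nonzero constant, hence order-type-preserving up to relabeling. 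Wait — scaling by a nonzero constant in $\F_p$ is \emph{not} order-preserving on $\{0,\dots,p-1\}$ in general, so I must be careful: the correct statement is that the lexicographic order restricted to $L_v^u$ agrees with a \emph{fixed} linear order on $\F_p$ that depends only on $v$ (namely, order $k$ by the value of $(\text{leading coord of }v)\cdot k$), and crucially this same order is used on $L$, $L'$ and $M$ since they all share the direction $v$. So set $C = \{k \in \F_p : u + kv \in A\}$ and $D = \{k \in \F_p : u' + kv \in B\}$, identified as subsets of $\F_p$ with this common order; then $|A \cap L| = |C|$, $|B \cap L'| = |D|$, and $(A+B) \cap M \supseteq \{k + k' : k \in C, k' \in D\}$, so $|(A+B)\cap M| \geq |C + D|$.

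Now $C_v(A) \cap L$ corresponds to the initial segment of size $|C|$ in $\F_p$, i.e. $\{0, 1, \dots, |C|-1\}$ (reading $0,1,\dots,p-1$ in the $v$-order), and likewise $C_v(B) \cap L'$ to $\{0,\dots,|D|-1\}$. Their sumset is contained in $\{0, 1, \dots, |C|+|D|-2\} \cap \F_p$, which is an initial segment of size $\min(|C|+|D|-1, p)$. By Cauchy--Davenport (Theorem~\ref{c-d}) applied in $\F_p$ with this order, $|C+D| \geq \min(|C|+|D|-1, p)$, hence $|(A+B) \cap M| \geq \min(|C|+|D|-1,p) \geq |\{0,\dots,|C|+|D|-2\} \cap \F_p|$. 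Therefore $\IS(|(A+B)\cap M|, M)$ contains the full initial segment $\{0,\dots,|C|+|D|-2\}\cap\F_p$ of $M$, which contains our point $w = a + b$. Taking the union over all such $w$ gives $C_v(A) + C_v(B) \subseteq C_v(A+B)$.

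**The main obstacle** is the bookkeeping around the lexicographic order on a $v$-line when $v$ is not a standard basis vector: one must verify that parametrizing the line by $k \in \F_p$ and carrying that same parametrization through to the sum-line $M$ is consistent, so that "initial segment of size $m$" means literally $\{0,1,\dots,m-1\}$ simultaneously on all three lines. Once that normalization is pinned down, the combinatorial core is exactly the observation that a sumset of two initial segments of $\F_p$ is contained in an initial segment whose size is governed by Cauchy--Davenport — the same mechanism as in the $p=2$ case, where it is essentially trivial. I expect no difficulty beyond this; the argument is a routine reduction to one dimension plus Cauchy--Davenport.
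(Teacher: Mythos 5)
Your proof follows the same route as the paper: reduce to line-by-line sums and apply Cauchy--Davenport to sums of initial segments in $\F_p$. However, there is a genuine gap in the normalization step. You claim that the lexicographic order on $L_v^u$, pulled back via the parametrization $k \mapsto u + kv$, ``agrees with a fixed linear order on $\F_p$ that depends only on $v$.'' This is false: if $i$ is the leading nonzero coordinate of $v$, the pulled-back order ranks $k$ by the value of $(u_i + v_i k) \bmod p$, so it depends on $u_i$ as well as on $v$. For instance, in $\F_5^2$ with $v = e_1$ and $u = (3,1)$, the lex order on $L_v^u$ corresponds to the $k$-order $2 \prec 3 \prec 4 \prec 0 \prec 1$, not $0 \prec 1 \prec \cdots \prec 4$. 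Consequently your assertion that $C_v(A)\cap L$ corresponds to the literal initial segment $\{0,\dots,|C|-1\}$ is unjustified for a generic base point.

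You do flag this bookkeeping as the ``main obstacle'' at the end, but the body of the proof never resolves it. The paper's fix is to choose, for each $v$-line, the unique representative $u$ with $u_i = 0$ (after first scaling so $v_i=1$); then the lex order on the line is exactly $u \prec u+v \prec \cdots \prec u+(p-1)v$, and, crucially, if $u_i = u'_i = 0$ then $(u+u')_i = 0$ too, so the three parametrizations of $L$, $L'$, and $M = L+L'$ are simultaneously normalized. Alternatively, one could observe that the $u_i$-shifts are additive along the map $(L,L') \mapsto L+L'$ and therefore cancel out, but some explicit step of this kind is required. Once that normalization is inserted, your reduction to Cauchy--Davenport is exactly the paper's argument.
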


\begin{proof}
Let $i \in \{1,...,n\}$ be the largest coordinate such that $v_i \neq 0$.
Without loss of generality $v_i=1$,
otherwise replace $v$ with $v_i^{-1}v$ without affecting $\mathcal{L}_v$.
We first show that for every two $v$-lines $L_v^u, L_v^w$
and $c,d \in \{1,...,p\}$,
\begin{equation}\label{LuLu}
IS\left(c,L_v^u\right) + IS\left(d,L_v^w\right)
\;=\; IS\left(\min(c+d-1,p),L_v^{u+w}\right).
\end{equation}
Note that for each $v$-line $L$,
there exists a unique $u \in \F_p^n$ such that $L = L_v^u$ and $u_i=0$.
Therefore, we can assume without loss of generality $u_i = w_i = 0$,
and hence $(u+w)_i=0$ as well.
For this choice of $u$, the lexicographic order of $L_v^u$ is
$u \prec u+v \prec u+2v \prec ... \prec u+(p-1)v$,
and of course the same holds if we replace $u$ with $w$ or $u+w$.
Now~\eqref{LuLu} reduces to addition of initial segments in~$\F_p$, which can be checked easily.

More generally, if $C,D \subseteq \F_p$ are non-empty such that
$A \cap L_v^u = u + Cv$ and $B \cap L_v^w = w + Dv$,
then $(A \cap L_v^u)+(B\cap L_v^w) = u+w+(C+D)v$.
By the Cauchy--Davenport theorem (Theorem ~\ref{c-d}) applied on $C$ and $D$,
$$ \min(|A \cap L_v^u| + |B \cap L_v^w| - 1, p)
   \;\leq\; |(A\cap L_v^u)+(B\cap L_v^w)| \;\leq\; |(A+B) \cap L_v^{u+w}|.$$
Putting this into~\eqref{LuLu}, we have
$$ IS\left(|A \cap L_v^u|,L_v^u\right) + IS\left(|B \cap L_v^w|,L_v^w\right)
   \;\subseteq\; IS\left(|(A+B) \cap L_v^{u+w}|,L_v^{u+w}\right) .$$
Note that this holds also when one of the summands on the left hand side is empty.
The proof is completed by taking the union over all $u$ and $w$.
\end{proof}

\begin{cor}\label{sumsetAA}
$|C_v(A) + C_v(A)| \leq |A+A|$.
\end{cor}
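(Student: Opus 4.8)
The plan is to deduce Corollary~\ref{sumsetAA} as an immediate consequence of Lemma~\ref{sumset} by specializing $B = A$. Applying the lemma with $B = A$ gives the inclusion $C_v(A) + C_v(A) \subseteq C_v(A+A)$, so in particular $|C_v(A) + C_v(A)| \leq |C_v(A+A)|$. It then remains only to observe that compression preserves cardinality: as already noted after the definition of $C_v$, one has $|C_v(S)| = |S|$ for any $S \subseteq \F_p^n$, since $C_v$ replaces each $v$-line slice $S \cap L$ by an initial segment of $L$ of the same size, and the $v$-lines partition $\F_p^n$. Applying this with $S = A+A$ yields $|C_v(A+A)| = |A+A|$, and combining with the previous inequality finishes the proof.

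I would present this in a couple of sentences, essentially: ``Apply Lemma~\ref{sumset} with $B = A$ to get $C_v(A)+C_v(A) \subseteq C_v(A+A)$, hence $|C_v(A)+C_v(A)| \le |C_v(A+A)| = |A+A|$, where the last equality is because $C_v$ preserves cardinality.'' There is essentially no obstacle here — the corollary is a one-line formal consequence of the lemma plus the size-preservation property that was recorded when the operator was defined. The only thing worth double-checking is that the size-preservation remark genuinely applies to an arbitrary subset of $\F_p^n$ (it does, by the partition-into-lines argument), so that it is legitimate to invoke it for the set $A+A$ and not just for $A$ itself.
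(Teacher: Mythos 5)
Your proposal is correct and matches the paper's intent exactly: the corollary is stated without proof because it follows immediately from Lemma~\ref{sumset} with $B=A$, together with the fact (noted when $C_v$ was defined) that compression preserves cardinality, applied to $A+A$.
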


By Corollary~\ref{sumsetAA}, compressions can reduce $|A+A|$.
However, they might reduce $|\left\langle A \right\rangle|$ as well.
In order to apply compressions most effectively in the proof of Theorem~\ref{main},
we only apply compressions that preserve the affine span.
For $A \subseteq \F_p^n$, we say that $A$ is \emph{$\EEE$-compressed}
if $E=\{0,e_1,e_2,...,e_n\} \subseteq A$
and $A$ is $v$-compressed for every $v$-compression such that $E \subseteq C_v(A)$.

\begin{exm*}
Let $E = \{0,e_1,e_2\} \subseteq \F_5^2$.
Then $A = \{0,e_1,2e_1,3e_1,e_2\}$ is $\EEE$-compressed,
but $A' = A \cup \{2e_2\}$ is not $\EEE$-compressed since $E \subseteq C_v(A') \neq A'$ for $v = e_2-e_1$.
\end{exm*}

We conclude this section with a lemma describing the structure of $\EEE$-compressed subsets.

\begin{lemma}[Structure of Compressed Subsets]\label{structure}
Let $A$ be an $\EEE$-compressed subset of $\F_p^n$.\\
Suppose:
\begin{itemize}
\item
$H = \span\{0,e_1,...,e_h\}$ is the maximal such subgroup contained in $A$,
\item
$a_i = e_{h+i}$ for $i \in \{1,...,m\}$,
where $m = \codim H=n-h$,
\item
$A_i = A \cap (a_i + H)$ for $i \in \{2,...,m\}$,
\item
$A_1 = A \cap (q a_1 + H)$
where $q \in \{1,...,p-1\}$ is maximal such that the intersection is non-empty.
\end{itemize}
Then
$$ A = H \cup (a_1+H) \cup (2a_1+H) \cup ... \cup ((q-1)a_1+H)
   \cup A_1 \cup A_2 \cup ... \cup A_m. $$
\end{lemma}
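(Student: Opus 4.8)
I plan to recast the statement in terms of the cosets of $H$ and then apply tailor-made compressions to pin them down. Throughout I assume $p>2$ (for $p=2$ one has $q=1$ and the assertion is the classical structure of compressed subsets of $\F_2^n$). Identify $\F_p^n/H$ with $\F_p^m$ so that $E$ projects onto the affine basis $\{0,\bar e_1,\dots,\bar e_m\}$ of $\F_p^m$, and for $c\in\F_p^m$ write $c\cdot a=\sum_{i=1}^{m}c_ie_{h+i}$, so that the cosets of $H$ are the sets $c\cdot a+H$. Two observations are used repeatedly. Since $\F_pe_j=L_{e_j}^{e_j}$ contains $0$ and $e_j$, we get $E\subseteq C_{e_j}(A)$, so $A$ is $e_j$-compressed for every $j$; hence $A$ is a \emph{down-set} for the coordinatewise order on $\{0,\dots,p-1\}^n$, which gives $A\cap H=H$ and makes $D:=\{c\in\F_p^m:A\cap(c\cdot a+H)\ne\emptyset\}$ a down-set. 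Also, if $i$ is the top non-zero coordinate of a direction $v$, then (by the normalization of $v$-lines used in the proof of Lemma~\ref{sumset}) $0$ and every $e_l$ with $l\ne i$ are the lexicographically least points of their $v$-lines, hence always lie in $C_v(A)$; consequently $E\subseteq C_v(A)$ — and, since $A$ is $\EEE$-compressed, $A$ is $v$-compressed — whenever the $v$-line through $e_i$ meets $A$ in some point other than $e_i$. As $E\cup H\subseteq A$, this makes available (among others) every $v=\lambda e_{h+i}+w$ with $\lambda\ne0$, $w\in H$, and every $v=e_{h+i}-e_{h+1}$ with $i\ge2$. The claimed equality amounts to: (a) $je_{h+1}+H\subseteq A$ for $1\le j\le q-1$; and (b) $D\subseteq\{0,\bar e_1,\dots,q\bar e_1\}\cup\{\bar e_2,\dots,\bar e_m\}$.

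For (a): since $A\cap(qe_{h+1}+H)\ne\emptyset$ and $A$ is a down-set, $qe_{h+1}\in A$. Fix $w\in H$; the direction $v=e_{h+1}+w$ is available because its line through $e_{h+1}$ contains $-w\in H\subseteq A$, so $A$ is $v$-compressed. On the $v$-line through $qe_{h+1}$, with lexicographic order by parameter $k=0\prec1\prec\dots\prec p-1$, the point with parameter $k$ lies in the coset $ke_{h+1}+H$ with $H$-component $(k-q)w$; since the point $k=q$ is in $A$, the initial-segment property forces the points $k=0,\dots,q$ into $A$, i.e.\ $je_{h+1}+(j-q)w\in A$ for $1\le j\le q$. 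For $1\le j\le q-1$ we have $j-q\not\equiv0\pmod p$, so as $w$ runs over $H$ the vectors $(j-q)w$ exhaust $H$, giving $je_{h+1}+H\subseteq A$.

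For (b): $D$ is a down-set containing $0,\bar e_1,\dots,q\bar e_1,\bar e_2,\dots,\bar e_m$, and $j\bar e_1\notin D$ for $j>q$ by the choice of $q$; it remains to rule out (i) members of $D$ with two or more non-zero coordinates and (ii) vectors $t\bar e_i$ with $i\ge2$ and $t\ge2$. If (i) failed, $D$ would — being a down-set — contain $\bar e_i+\bar e_j$ for some $i<j$, so $e_{h+i}+e_{h+j}\in A$. If $i\ge2$, the available compression $v=e_{h+i}-e_{h+1}$ applied to the $v$-line through $e_{h+i}+e_{h+j}$ yields $e_{h+1}+e_{h+j}\in A$, so we may assume $i=1$ and $e_{h+1}+e_{h+j}\in A$ with $j\ge2$. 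Then $v=e_{h+j}-e_{h+1}$ on the $v$-line through $e_{h+1}+e_{h+j}$ forces $2e_{h+1}\in A$, whence $q\ge2$ and, by (a), $e_{h+1}+H\subseteq A$; and inductively, if $(r-1)e_{h+1}+H\subseteq A$ for some $2\le r\le p-1$, then for each $w\in H$ the direction $v=e_{h+j}-(r-1)e_{h+1}-w$ is available (its line through $e_{h+j}$ contains $(r-1)e_{h+1}+w\in A$), and applying it to the $v$-line through $e_{h+1}+e_{h+j}$ forces $re_{h+1}+w\in A$; letting $w$ run over $H$ gives $re_{h+1}+H\subseteq A$. Thus $re_{h+1}+H\subseteq A$ for all $0\le r\le p-1$, i.e.\ $\span\{0,e_1,\dots,e_{h+1}\}\subseteq A$, contradicting the maximality of $H$. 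Finally (ii) reduces to (i): if $te_{h+i}\in A$ with $i\ge2$, $t\ge2$, then $v=e_{h+i}-e_{h+1}$ on the $v$-line through $te_{h+i}$ forces $(t-1)e_{h+1}+e_{h+i}\in A$, so $D$ contains $(t-1)\bar e_1+\bar e_i$, a vector with two non-zero coordinates — contradicting (i). With (a) and (b), $A$ is the union of the full cosets $H,e_{h+1}+H,\dots,(q-1)e_{h+1}+H$, of $A\cap(qe_{h+1}+H)=A_1$, and of $A\cap(e_{h+i}+H)=A_i$ for $2\le i\le m$, which is the asserted decomposition.

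The step I expect to be the main obstacle is the induction on $r$ in part (i): the availability of each successive compression $v=e_{h+j}-(r-1)e_{h+1}-w$ depends on the fullness of the coset $(r-1)e_{h+1}+H$ proved at the previous stage, so the argument must be run as a bootstrap anchored at $q\ge2$ and carried all the way to $r=p-1$ before the maximality of $H$ is contradicted. Verifying the lexicographic order along these oblique lines (as in the proof of Lemma~\ref{sumset}) and the accompanying coordinate bookkeeping is the only other delicate point; everything else is routine.
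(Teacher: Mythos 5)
Your proof is correct. The key observation — that $A$ is $(e_i - v)$-compressed whenever $v \in A \cap \span\{e_1,\dots,e_{i-1}\}$ — and the resulting down-set property are exactly the paper's starting point, and your part (a) is the same line-by-line argument. Where you diverge is in how the maximality of $H$ is brought to bear. The paper argues directly: since $H$ is maximal, $H + \F_p a_1 \not\subseteq A$, so (using that $A$ is $a_1$-compressed) there is a witness $v \in A \cap (H + \F_p a_1)$ with $v + a_1 \notin A$, and from this single witness one concludes in one step each that $a_1 + a_i$, then $2a_i$, then $a_i + a_j$ lie outside $A$. You instead run the contrapositive as a bootstrap: assuming some $e_{h+i}+e_{h+j}\in A$, you first normalize to $i=1$, then show that $r e_{h+1}+H\subseteq A$ forces $(r+1)e_{h+1}+H\subseteq A$, and iterate all the way to $\span\{e_1,\dots,e_{h+1}\}\subseteq A$, contradicting maximality. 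Both routes are sound; the paper's witness argument is more compact, while your bootstrap is more explicit and makes every invocation of a compression and its lexicographic ordering self-contained. Your explicit introduction of the projected down-set $D\subseteq\F_p^m$ and the clean split into cases (i)/(ii) also spells out what the paper leaves to "examination of the above, together with the down-set property." One small point worth making explicit in your write-up: the ``availability'' criterion ($E\subseteq C_v(A)$ iff the $v$-line through the top-coordinate basis vector meets $A$ in a second point) is correct, but it silently uses that $e_i$ is the \emph{second} lexicographically smallest point on its $v$-line (after $e_i - v$), so the length-two initial segment already captures it; this is implicit in your phrasing but deserves a sentence.
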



\begin{proof}
The lemma claims that certain $H$-cosets are contained in $A$ while others are disjoint from~$A$.
Throughout the proof, we pick pairs of elements $u,v \in \F_p^n$ such that $u \prec v$ and $A$ is $(v-u)$-compressed,
and then accordingly reason about their incidence to $A$.

We start with a useful observation regarding $\EEE$-compressed subsets:
If $i \in \{1,...,n\}$ and~$v \in A \cap \span\{e_1,...,e_{i-1}\}$,
then $A$ is $(e_i - v)$-compressed.
Indeed, $E \subseteq C_{e_i - v}(A)$
since every element in $E$ is the lexicographically smallest in its $(e_i - v)$-line,
except for $e_i$ which is preceded only by $v$, and $v \in A$.
In particular, taking $v=0$, $A$ is $e_i$-compressed for $i \in \{1,...,n\}$.
In other words, $A$ is a down-set in the partial order of comparison in all coordinates.

Now, let $H$, $h$, $m$, $a_1,...,a_m$, $A_1,...,A_m$ and $q$
be as in the statement of the theorem.
By the above observation, $A$ has the following properties.
\begin{enumerate}
\item
Every element in $(q-1)a_1+H$ is lexicographically smaller than every element in $qa_1+H$,
and such two elements lie on some $(a_1 - v)$-line where $v \in H$.
Since $A$ is $(a_1 - v)$-compressed for all $v \in H$, and $A$ intersects $qa_1+H$,
this means that $(q-1)a_1+H$ is contained in $A$.
\item
By maximality, $H$ is contained in $A$ but $H + \F_p a_1$ is not.
In other words, there exists $v \in A \cap (H + \F_p a_1)$ such that $v+a_1 \not \in A$.
Now, $A$ is $(a_i - v)$-compressed for $i \in \{2,...,m\}$.
Since $a_1+a_i$ is larger than $v+a_1$ in an $(a_i - v)$-line, $a_1 + a_i \notin A$ too.
\item
$A$ is $(a_i - a_1)$-compressed for $i>1$.
As $a_1+a_i \prec 2a_i$, $2a_i \notin A$ as well.
\item
$A$ is $(a_i - a_j)$-compressed for $i>j>1$.
As $2a_j \prec a_j + a_i$, also $a_j + a_i \notin A$.
\end{enumerate}
Examination of the above, together with use of the down-set property,
yield the desired $H$-cosets structure of $A$.
\end{proof}

\section{An Upper Bound on \texorpdfstring{$F(p,K)$}{}}\label{proofsection}

\begin{proof}(of Theorem~\ref{main})
Suppose $A \subseteq \F_p^n$ such that
$|\left\langle A \right\rangle|/|A| = p^{2K-2} / (2K - 1)$ for some $K \geq K_0$,
where $K_0$ will be determined later.
We have to show that $|A+A|/|A| \geq K$.
Since $p^{2K-2} / (2K - 1)$ is monotone in $K$, the theorem would follow.

Without loss of generality we may assume that $E = \{0,e_1,...,e_n\} \subseteq A$.
Indeed, $|A|$, $|A+A|$ and $|\left\langle A \right\rangle|$
are unaffected by affine transformations.
Let \emph{height}$(a)$ be $a$'s index in the lexicographic order.
Now by induction on $\sum_{a \in A}\height(a)$, $A$ is reduced to be $\EEE$-compressed.
Otherwise apply a $v$-compression such that
$\left\langle C_v(A) \right\rangle=\left\langle E \right\rangle=\left\langle A \right\rangle$,
while $|A+A|/|A| \geq |C_v(A)+C_v(A)|/|C_v(A)| \geq K$
by Corollary~\ref{sumsetAA} and the induction hypothesis.

Therefore, $A$ has the structure described in Lemma~\ref{structure}.
Namely, for $H$, $m$, $a_1,...,a_m$, $A_1,...,A_m$ and $q$ as in the lemma,
$$A = \;\bigcup\limits_{i=0}^{q-1}\; (ia_1+H) \;\cup\;\bigcup\limits_{i=1}^{m}\; A_i \;.$$
This partition yields an estimate for $|A|$,
\begin{equation}
\frac{q}{p^m}  \;\leq\; \frac{|A|}{|\spA|} \;\leq\; \frac{m+q}{p^m} \;.
\label{A}
\end{equation}
Likewise, the implied structure of the sum is
$$A+A = \;\bigcup\limits_{i=0}^{2q-1}\; (ia_1+H) \;\cup\;\;\bigcup\limits_{j=2}^{m}\bigcup\limits_{i=0}^{q-1}\;\;(ia_1+a_j+H)
\;\cup\bigcup\limits_{1 \leq i \leq j \leq m}(A_i+A_j) \;.$$
If $2q < p$ then the union is disjoint.
Otherwise, it includes $p-1$ distinct cosets of the form $ia_1+H$, other than $(2q-p)a_1+H$ which contains $A_1+A_1$.
In either case,
\begin{equation}
|A+A| \;\geq\; \min(2q,p-1)|H|+(m-1)q|H| + \sum\limits_{i\leq j}|A_i+A_j|\;	.
\label{AA}
\end{equation}
We can further simplify,
$$ \sum\limits_{i\leq j}|A_i+A_j| \geq \sum\limits_{i\leq j}\max(|A_i|,|A_j|) \geq
   \sum\limits_{i\leq j}\frac{|A_i|+|A_j|}{2} = \frac{m+1}{2}\sum\limits_{i}|A_i| =
   \frac{m+1}{2}(|A|-q|H|) .$$
We substitute this and $|H| = |\spA| / p^m$ into~\eqref{AA}, to obtain
\begin{equation}\label{L}
|A+A| \;\geq\;
\min\limits_{q,m} \left[ \left( \min(2q,p-1)+\frac{(m-3)q}{2}\right) \frac{|\spA|}{p^m} \;+\;
                         \frac{m+1}{2}|A| \right] ,
\end{equation}
where $1 \leq q < p$ and $m$ is restricted by bounds that follow from~\eqref{A}:
a lower bound $m \geq \log_p\left( q|\spA|/|A| \right)$, and an (implicit) upper bound
$$ F_q(m) \;:=\; \frac{p^m}{m+q} \;\leq\; \frac{|\spA|}{|A|} .$$
As we show below (Claim~\ref{claim1}),
the right-hand side of~\eqref{L} decreases with $m$ (real) for fixed $q$.
It suffices therefore to consider only the largest possible value of $m$,
namely $m = F_q^{-1}\left( |\spA|/|A| \right)$.
After some rearrangement, \eqref{L} becomes
$$ \frac{|A+A|}{|A|} \;\geq\;
   \min\limits_q \; G_q\left(F_q^{-1}\left( \frac{|\spA|}{|A|} \right)\right) ,$$
where
$$ G_q(m) \;:=\; \left( \min(2q,p-1)+\frac{(m-3)q}{2}\right) \frac{1}{m+q} + \frac{m+1}{2}
\;=\; \frac{\binom{m+1}{2} + q(m-1) + \min(2q,p-1)}{m+q} .$$
We also show (Claim~\ref{claim2}) that of the $p-1$ real functions $F_q(m) \mapsto G_q(m)$,
the smallest one corresponds to $q=1$.
The theorem is then established by routine verification of the identity
$F_1(m) = p^{2G_1(m)-2} / (2G_1(m) - 1)$, so that $|A+A|/|A| \geq K$.

We now turn to justify the choice of $m$ and $q$.

\begin{claim}[choosing $m$]\label{claim1}
The right-hand side of~\eqref{L} is a decreasing function of $m$ in the relevant interval.
That is, for any fixed $p>2$ and $1 \leq q<p$,
$ \left( 2\min(2q,p-1)+(m-3)q\right) \frac{|\spA|}{2p^m} + (m+1)\frac{|A|}{2} $
is a decreasing function of $m$ whenever
$|\spA|/|A| = p^{2K-2}/(2K-1) \in \left[p^m/(m+q),p^m/q\right]$ and $K \ge K_0$.
\end{claim}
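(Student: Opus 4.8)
The plan is to show that the derivative in $m$ is negative throughout the allowed range. Write $S = |\spA|$, $N = |A|$, $R = S/N = p^{2K-2}/(2K-1)$, and put $b = 2\min(2q,p-1) - 3q$, so that the quantity in question is
$$ f(m) \;=\; \frac{S}{2}\,(qm+b)\,p^{-m} \;+\; \frac{N}{2}\,(m+1) . $$
Differentiating gives $f'(m) = \tfrac{S}{2}\,p^{-m}\bigl(q - (qm+b)\ln p\bigr) + \tfrac{N}{2}$, so $f'(m) < 0$ is equivalent to
$$ R\,p^{-m}\,\bigl((qm+b)\ln p - q\bigr) \;>\; 1 . $$
The hypothesis supplies $R \ge p^m/(m+q)$, hence $R\,p^{-m} \ge (m+q)^{-1}$; so, once we check that $(qm+b)\ln p - q > 0$ in the relevant range (immediate for $m \ge 4$, and $m$ will turn out to be much larger), it suffices to establish the cleaner inequality
$$ (qm+b)\ln p \;>\; m + 2q . $$

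I would prove this by cases on the $\min$. If $2q \le p-1$ then $b = q$, and the target becomes $q(m+1)\ln p > m + 2q$; the difference of its two sides, $q\bigl((m+1)\ln p - 2\bigr) - m$, is increasing in $q \ge 1$, so it suffices to treat $q = 1$, i.e. $(m+1)\ln p > m+2$, which holds once $m > (2-\ln p)/(\ln p - 1)$. If instead $2q > p-1$ then $b = 2(p-1) - 3q$ and the target rearranges to $m\,(q\ln p - 1) > q\,(2 + 3\ln p) - 2(p-1)\ln p$; bounding the right-hand side with $q \le p-1$ and the coefficient $q\ln p - 1$ from below with $q \ge (p+1)/2$ reduces this to $m > 2(p-1)(2+\ln p)/\bigl((p+1)\ln p - 2\bigr)$. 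In both cases the resulting threshold on $m$ is an elementary decreasing function of $p \ge 3$, hence maximized at $p = 3$, where it stays below $10$.

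Finally I would verify that $m$ is this large. The other half of the hypothesis, $R \le p^m/q$ together with $q \ge 1$, forces $m \ge \log_p R = 2K - 2 - \log_p(2K-1)$, and for $p \ge 3$, $K \ge K_0 = 8$ this is well above $10$ (smallest, about $11.5$, when $p = 3$ and $K = 8$); this also retroactively justifies the positivity used above. Assembling the pieces yields $f'(m) < 0$ on the whole interval, which is the claim. The only delicate point — more bookkeeping than obstacle — is tracking the two branches of the $\min$: that enlarging $q$ cannot hurt in the first branch, and which of the two one-sided $q$-bounds is in force in the second; the generous value of $K_0$ leaves plenty of slack.
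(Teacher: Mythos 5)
Your proposal is correct and takes essentially the same route as the paper: differentiate in $m$, use $|A| \le (m+q)|\spA|/p^m$ to reduce $f'(m)<0$ to the inequality $\bigl((m-3)q+2\min(2q,p-1)\bigr)\ln p > m+2q$, and then observe that the hypothesis forces $m \ge \log_p R \gtrsim 2K-2-\log_p(2K-1)$, which for $K\ge 8$ comfortably exceeds the resulting threshold. The paper packages the threshold as a single expression $m(p,q)$ covering both branches of the $\min$ and bounds it by $10$; you unfold the two branches and eliminate the $q$-dependence more explicitly (monotonicity in $q$ in the first branch, the pair of bounds $q\ge(p+1)/2$, $q\le p-1$ in the second), but the underlying inequalities and constants are the same.
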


\begin{proof}
Differentiating with respect to $m$ gives
$\frac{|A|}{2}+\frac{|\spA|}{2p^m}\left(q-((m-3)q+2\min(2q,p-1))\ln p\right)$.
For this expression to be negative when $|A| \leq (m+q)\frac{|\spA|}{p^m}$,
it is sufficient to require
$$ m+q \leq ((m-3)q+2\min(2q,p-1))\ln p - q ,$$
or equivalently
$$ m \;\geq\; m(p,q) := \max
\left(\frac{2q-1}{q \ln p - 1}-1 \;,\; \frac{2q+3-2(p-1)\ln p}{q \ln p - 1}+3 \right) .$$
Since $m \geq \log_p\left( q \cdot |\spA|/|A| \right)$ by assumption,
requiring $|\spA|/|A| \geq \max_q \left(p^{m(p,q)}/q\right) $ would clearly make it happen.
In terms of the condition $|\spA|/|A| = p^{2K-2} / (2K - 1)$ for $K \geq K_0$,
we only have to choose $K_0$ accordingly.
One may verify that $K_0 \geq 8$ ensures
$|\spA|/|A| \geq p^{14}/15 \geq p^{10} \geq p^{m(p,q)}$
for every $p$ and $q$ under consideration.
\end{proof}

\begin{claim}[choosing $q$]\label{claim2}
The function $G_q \circ F_q^{-1}$ is minimal for $q=1$. That is,
$$
G_q \circ F_q^{-1} (|\spA|/|A|) \;\ge\; G_1 \circ F_1^{-1} (|\spA|/|A|),
$$
where $|\spA|/|A| = p^{2K-2}/(2K-1)$ for $K \ge K_0$.
\end{claim}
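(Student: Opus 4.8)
The plan is to collapse the claim into a one-line elementary inequality by first putting $G_q$ in closed form. Writing everything over the common denominator $m+q$ gives
$$ G_q(m) \;=\; \frac{m+q+1}{2} \;-\; \frac{q^2+3q-2\min(2q,p-1)}{2(m+q)} . $$
For $q=1$ the numerator of the second term is $1+3-2\min(2,p-1)=0$ (here $p>2$ is used), so $G_1(m)=(m+2)/2$ and hence $F_1(m)=p^{\,2G_1(m)-2}/(2G_1(m)-1)$ — this is exactly the identity invoked at the end of the proof of Theorem~\ref{main}. Consequently, with $t:=|\spA|/|A|=p^{2K-2}/(2K-1)$ one has $m_1:=F_1^{-1}(t)=2K-2$ and $G_1\circ F_1^{-1}(t)=K$, so the claim is equivalent to the statement that $G_q\bigl(F_q^{-1}(t)\bigr)\ge K$ for every $q\in\{2,\dots,p-1\}$.

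The second step is to locate $m_q:=F_q^{-1}(t)$ relative to $m_1$. From $F_q(m)=p^m/(m+q)\le p^m/(m+1)=F_1(m)$ for all $m$ and all $q\ge1$, together with the fact that $F_q$ is strictly increasing for $m\ge14$ (its derivative has the sign of $(m+q)\ln p-1>0$), one gets $m_q\ge m_1$. Feeding the closed form into the target inequality $G_q(m_q)\ge K=(m_1+2)/2$ and using $m_1=2K-2$, it rearranges to
$$ (m_q-m_1)+(q-1)\;\ge\;\frac{q^2+3q-2\min(2q,p-1)}{m_q+q} . $$
Since $q\le p-1$, both $2q$ and $p-1$ are at least $q$, so $\min(2q,p-1)\ge q$ and the numerator on the right is at most $q^2+3q-2q=q(q+1)$. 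Dropping the nonnegative summand $m_q-m_1$ on the left, it therefore suffices to check $(q-1)(m_q+q)\ge q(q+1)$, i.e. $(q-1)m_q\ge 2q$, i.e. $m_q\ge 2q/(q-1)$. For $q\ge2$ the right-hand side is at most $4$, while $m_q\ge m_1=2K-2\ge14$ once $K\ge K_0=8$; this establishes the inequality and hence the claim.

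I do not expect a genuine obstacle here: every estimate is elementary once the closed form for $G_q$ is available. The one point that deserves a little care is the truncation term $\min(2q,p-1)$: it must be tracked through the algebra (conveniently, the crude bound $\min(2q,p-1)\ge q$ always suffices), and, more importantly, it means $q$ can be as large as $p-1$, so the final bound $m_q\ge 2q/(q-1)$ has to be verified uniformly in $q$ rather than by pretending $q$ is small compared to $m_1$. The closed form for $G_q$ together with the monotonicity observation $m_q\ge m_1$ is exactly what renders the estimate insensitive to the size of $q$.
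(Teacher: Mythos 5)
Your proof is correct, and it takes a genuinely different route from the paper's, even though both rest on the same closed form for $G_q$ and the same crude bound $q^2+3q-2\min(2q,p-1)\le q(q+1)$. The paper first observes that $G_q$ and $F_q$ are increasing, recasts the claim as $F_q\circ G_q^{-1}\le F_1\circ G_1^{-1}$, explicitly inverts the quadratic $G_q$ to get $G_q^{-1}(x+\tfrac12)=x-q+\sqrt{x^2+g(q)}$, and then verifies the resulting exponential inequality $p^{x-q+\sqrt{x^2+g(q)}}/(x+\sqrt{x^2+g(q)})\le p^{2x-1}/(2x)$ for $x\ge 5/2$. You never invert $G_q$. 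Instead you exploit the pointwise bound $F_q(m)\le F_1(m)$, which the paper never mentions, together with monotonicity of $F_q$, to conclude $m_q:=F_q^{-1}(t)\ge m_1:=F_1^{-1}(t)=2K-2$; then the closed form for $G_q$ reduces the claim to $(m_q-m_1)+(q-1)\ge g(q)/(m_q+q)$, which after dropping $m_q-m_1\ge0$ collapses to $m_q\ge 2q/(q-1)\le 4$, trivially true since $m_q\ge 2K-2\ge 14$. Your organization is slightly more elementary — no quadratic formula and no comparison of two exponential expressions — and it even makes transparent that $K\ge3$ (giving $m_1\ge4$) would already suffice for this claim, matching the paper's threshold $K_0\ge3$ here. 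The paper's route has the advantage of exhibiting the two curves $F_q\circ G_q^{-1}$ in a uniform closed form, which is what is reused in the remark that each $F_q\circ G_q^{-1}$ is attained by the explicit example $A_{q,m}$. One minor imprecision on your side: you justify $F_q$ increasing only for $m\ge14$, but it is increasing whenever $(m+q)\ln p>1$, which holds for all $m,q\ge1$ and $p\ge3$; no harm is done since you work in the range $m\ge14$ anyway.
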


\begin{proof}
Since $G_q$ and $F_q$ are both increasing functions,
the claim is equivalent to $F_q \circ G_q^{-1}$ being maximal for $q=1$.
Solving the quadratic gives $G_q^{-1}(x+\frac12) = x-q+\sqrt{x^2+g(q)}$
where $g(q) := q^2+3q-2\min(2q,p-1)$.
Note that $g(q)$ is always between $q(q-1)$ and $q(q+1)$.
For $q \geq 2$, we have to confirm the inequality,
$$ F_q\left(G_q^{-1}\left(x+\frac12\right)\right)
\;=\; \frac{p^{x-q+\sqrt{x^2+g(q)}}}{x+\sqrt{x^2+g(q)}}
\;\leq\; \frac{p^{2x-1}}{2x}
\;=\; F_1\left(G_1^{-1}\left(x+\frac12\right)\right)
\;.$$
Indeed, for $x \geq 5/2$ it readily follows from $0 \leq g(q) \leq q(q+1)$.
In our setting $|\spA|/|A| = F_1\left(G_1^{-1}\left(K\right)\right)$,
hence choosing $K_0 \geq 3$ is sufficient.
\end{proof}

This concludes the proof of Theorem~\ref{main}.
\end{proof}

\begin{rmks*} (on the proof)
\begin{enumerate}
\item
It is interesting to note that if we fix $q$,
then the function $F_q \circ G_q^{-1}$ gives a tight upper bound.
This can be seen by setting $A_{q,m} =  \{0,e_1,2e_1,...,qe_1,e_2,e_3,...,e_m\}$.
\item
The proof yields $K_0=8$.
One may wonder whether the statement of Theorem~\ref{main} is true for every $K \geq 1$.
The answer to this question is negative, as demonstrated by
$A_{2,2} = \{0,e_1,2e_1,e_2\} \subseteq \F_3^2$.
For $K=1.75$, this subset shows that $F(3,K) \geq 2.25$,
which is higher than the suggested $3^{2K-2}/(2K-1) \approx 2.08$.
Probably further investigation of the structure of compressed sets
is required to establish sharp upper bounds on $F(p,K)$ in this range
(see~\cite{evenzohar}).
\item
Nevertheless, already the proofs of Claims~\ref{claim1}-\ref{claim2}
can be used to obtain smaller $K_0(p)$ for a given~$p$.
For example, one can take $K_0(3)= 6.72$ and $K_0(5) = 2.30$.
We also state without proof that a closer analysis would enable showing that
$K_0(p) \rightarrow 1$ as $p \rightarrow \infty$.
\end{enumerate}
\end{rmks*}

\section{Discussion}

We established in this work the conjecture of Ruzsa for all groups of prime torsion.
The fact that a lexicographic order can be defined in $\Z_{p^k}$
(see, e.g.,~\cite{bollobas_leader,eliahou_kervaire_plagne}),
such that initial segments minimize cardinalities of sumsets
as in the Cauchy--Davenport theorem,
suggests that these techniques can be extended to prime-power torsion.
Still, a number of technical challenges need to be resolved.

The case of general composite torsion seems more challenging,
as no effective analogs of the compression operators are known in this case.
We note that in some instances, over groups of composite torsion one can find
significantly different extremal structures than over prime or prime-power torsion groups.
For example, in~\cite{grolmusz} explicit Ramsey graphs are constructed, based on
incidence structure over $\Z_6$ which cannot exist over prime-power torsion groups.
Whether this is the case also in the current setting remains to be seen.

\bibliographystyle{abbrv}
\bibliography{freimanff}
\end{document}